\newcommand{\C}{\mathbb{C}}
\newcommand{\D}{\mathbb{D}}
\newcommand{\E}{\mathbb{E}}
\newcommand{\lm}{\lambda}
\newcommand{\ph}{\varphi}
\newcommand{\q}{\quad}
\newcommand{\n}{\lVert}
\newcommand{\Lra}{\Leftrightarrow}
        \def\textmatrix#1&#2\\#3&#4\\{\bigl({#1 \atop #3}\ {#2 \atop #4}\bigr)}
        \def\dispmatrix#1&#2\\#3&#4\\{\left({#1 \atop #3}\ {#2 \atop #4}\right)}
        \newtheorem{defn}{Definition}[section] 
        \newtheorem{lem}[defn]{Lemma}
        \newtheorem{thm}[defn]{Theorem}
\begin{document}
\title[A generalized Schwarz lemma]
{A generalized Schwarz lemma for two domains related to
$\mu$-synthesis}

\author[Sourav Pal]{Sourav Pal}
\address[Sourav Pal]{Mathematics Department, Indian Institute of Technology Bombay,
Powai, Mumbai - 400076, India.} \email{sourav@math.iitb.ac.in}

\author[Samriddho Roy]{Samriddho Roy}
\address[Samriddho Roy]{Mathematics Department, Indian Institute of Technology Bombay,
Powai, Mumbai - 400076, India.} \email{sroy@math.iitb.ac.in
}

\keywords{Tetrablock, Schwarz lemma}

\subjclass[2010]{30C80, 32F45}

\thanks{The first author is supported by the Seed Grant of IIT Bombay, the CPDA and the INSPIRE
Faculty Award (Award No. DST/INSPIRE/04/2014/001462) of DST,
India. The second author is supported by a Ph.D fellowship from
the University Grand Commission of India.}

\begin{abstract}
We present a set of necessary and sufficient conditions that
provides a Schwarz lemma for the tetrablock $\mathbb E$. As an
application of this result, we obtain a Schwarz lemma for the
symmetrized bidisc $\mathbb G_2$. In either case, our results
generalize all previous results in this direction for $\mathbb E$
and $\mathbb G_2$.
\end{abstract}

\maketitle

\section{Introduction}

\noindent The aim of this article is to prove an explicit Schwarz
lemma for two polynomially convex domains related to
$\mu$-synthesis, the symmetrized bidisc $\mathbb G_2$ and the
tetrablock $\mathbb E$, defined as
\begin{align*}
& \mathbb G_2=\{(z_1+z_2,z_1z_2)\,:\, z_1,z_2 \in\mathbb D  \}\,,
\\ & \mathbb E = \{ (x_1,x_2,x_3)\in\mathbb C^3\,:\,
1-zx_1-wx_2+zwx_3\neq 0 \,, z,w \in\overline{\mathbb D} \}.
\end{align*}
The classical Schwarz lemma gives a necessary and sufficient
condition for the solvability of a two-point interpolation problem
for analytic functions from the open unit disc $\mathbb D$ to
itself, and describes the extremal functions. It has substantial
generalizations in which the two copies of $\mathbb D$ are
replaced by various other domains \cite{Din}, typically either
homogeneous or convex. In this paper, we prove a sharp Schwarz
lemma for two domains which are neither homogeneous nor convex. We
believe that our results will throw new lights on the spectral
Nevanlinna{Pick problem, which is to interpolate from the unit
disc to the set of $k \times k$ matrices of spectral radius no
greater than $1$ by analytic matrix functions.\\

We first produce several independent necessary and sufficient
conditions under which there exists an analytic function from
$\mathbb D$ to $\mathbb E$ that solves a two-point Nevanlinna-Pick
interpolation problem for the tetrablock. This, in a way,
generalizes the previous result of Abouhajar, White and Young in
this direction, \cite{awy}. We mention here that the domain
tetrablock was introduced by these three mathematicians in
\cite{awy} and that this domain has deep connection with the most
appealing and difficult problem of $\mu$-synthesis (see
\cite{bharali} for further details). Since then the tetrablock has
attracted considerable attention
\cite{tirtha,EKZ,EZ,sourav1,sourav2,bharali,sourav3,zwonek1}. So,
our first main result is the following Schwarz lemma for the
tetrablock.

\begin{thm} \label{schwarzL}
    Let $\lm_0 \in \D\setminus \{0\}$ and let $x=(a,b,p) \in \E$.  Then the following conditions are equivalent:
        \item[$(1)$] there exists an analytic function $\ph: \D \to \bar\E$ such that
        $\ph(0)=(0,0,0)$ and $\ph(\lm_0) = x$;
        \item[$(1^\prime)$] there exists an analytic function $\ph: \D \to \E$ such that
        $\ph(0)=(0,0,0)$ and $\ph(\lm_0) = x$;
        \item[$(2)$]
        \[
        \max \left\{ \frac{|a-\bar b p|+ |ab-p|}{1-|b|^2},
        \frac{|b-\bar a p|+|ab-p|}{1-|a|^2} \right\} \leq |\lm_0|;
        \]
        \item[$(3)$] either $|b| \leq |a|$ and
        \[
        \frac{|a-\bar b p|+ |ab-p|}{1-|b|^2} \leq |\lm_0|
        \]
        or $|a| \leq |b|$ and
        \[
        \frac{|b-\bar a p|+|ab-p|}{1-|a|^2} \leq |\lm_0|;
        \]
        \item[$(4)$] there exists a $2\times 2$ function $F$ in the Schur class such that
        \[
        F(0) = \left[ \begin{array}{cc} 0 & * \\ 0 & 0 \end{array} \right] \mbox{ and }
        F(\lm_0) = A = [a_{ij}]
        \]
        where $x = (a_{11},a_{22}, \det A).$\\

        \item[$(5)$] either $|b|\leq |a|$ and $\n \Psi(.,x) \n _{H^\infty}
        \leq
        |\lm_0|$\\

        or $|a|\leq |b|$ and $\n \Upsilon(.,x) \n _{H^\infty} \leq
        |\lm_0|;$\\

        \item[$(6)$] either $|b|\leq |a|$ and
        \[
        |a|^2 - |\lm_0|^2|b|^2 +|p|^2 + 2\Big| |\lm_0|^2b - \bar ap \Big| \leq |\lm_0|^2
        \]
        or $|a|\leq |b|$ and
        \[
        |b|^2 - |\lm_0|^2|a|^2 +|p|^2 + 2\Big| |\lm_0|^2a - \bar bp \Big| \leq |\lm_0|^2;
        \]

        \item[$(7)$] either $\lm_0 - az - b\lm_0w + pzw \neq 0$ for all $z,w \in \D$
        and $|b|\leq |a|$\\

        or $\lm_0 - a\lm_0z -bw + pzw \neq 0$ for all $z,w \in \D$ and
        $|a|\leq|b|$;\\

        \item[$(8)$] either $|b|\leq |a|$ and
        \[
        |\lm_0||a-\bar b p| + ||\lm_0|^2b-\bar a p|+
        |p|^2 \leq |\lm_0|^2
        \]
        or $|a|\leq |b|$ and
        \[
        |\lm_0||b-\bar a p| + ||\lm_0|^2a-\bar b p|+
        |p|^2 \leq |\lm_0|^2 ;
        \]

        \item[$(9)$] either $|b|\leq |a|$, $|p|\leq |\lm_0|$ and
        \[
        |a|^2+|\lm_0b|^2-|p|^2 + 2|\lm_0||ab-p| \leq |\lm_0|^2
        \]
        or $|a|\leq |b|$, $|p|\leq |\lm_0|$ and
        \[
        |b|^2+|\lm_0a|^2-|p|^2 + 2|\lm_0||ab-p| \leq |\lm_0|^2
        \]

        \item[$(10)$] either $|b|\leq |a|$, $|p| \leq |\lm_0|$ and there exist $\beta_1,\beta_2\in\mathbb
        C$ with $|\beta_1|+|\beta_2| \leq 1$ such that
        \[
        a=\beta_1\lm_0+\bar{\beta}_2 p \text{ and }
        b\lm_0=\beta_2 \lm_0+\bar{\beta}_1p
        \]
        or $|a|\leq |b|$, $|p| \leq |\lm_0|$ and there exist $\beta_1,\beta_2\in\mathbb
        C$ with $|\beta_1|+|\beta_2| \leq1$ such that
        \[
        a\lm_0=\beta_1\lm_0+\bar{\beta}_2 p \text{ and }
        b=\beta_2\lm_0+\bar{\beta}_1p.
        \]

\end{thm}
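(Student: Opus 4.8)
The plan is to run the whole web of equivalences through the ``analytic core'' $(1)\Leftrightarrow(1')\Leftrightarrow(2)\Leftrightarrow(4)$ and then to obtain each of $(3),(5)$--$(10)$ as an algebraic repackaging of one of these. Throughout I would work with the two rational functions $\Psi(\cdot,x)$ and $\Upsilon(\cdot,x)$ attached to a point $x=(a,b,p)$, with the Abouhajar--White--Young membership criterion
\[
y\in\bar\E \iff \n\Psi(\cdot,y)\n_{H^\infty}\le 1 \ \text{and}\ |y_2|\le 1 \qquad(\text{strict inequalities for } y\in\E),
\]
and with the elementary fact that a M\"obius function $z\mapsto(\alpha z+\beta)/(\gamma z+\delta)$ with $|\delta|>|\gamma|$ has $H^\infty(\D)$-norm $(|\beta\bar\delta-\alpha\bar\gamma|+|\alpha\delta-\beta\gamma|)/(|\delta|^2-|\gamma|^2)$. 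Applied to $\Psi(\cdot,x)$ and $\Upsilon(\cdot,x)$ this gives $\n\Psi(\cdot,x)\n_{H^\infty}=(|a-\bar bp|+|ab-p|)/(1-|b|^2)$ and $\n\Upsilon(\cdot,x)\n_{H^\infty}=(|b-\bar ap|+|ab-p|)/(1-|a|^2)$, so that $(2)$ reads $\max\{\n\Psi(\cdot,x)\n_{H^\infty},\n\Upsilon(\cdot,x)\n_{H^\infty}\}\le|\lm_0|$ and $(5)$ is literally $(3)$. Consequently, once $(2)\Leftrightarrow(1)$ and $(3)\Leftrightarrow(1)$ are both proved, $(2)\Leftrightarrow(3)\Leftrightarrow(5)$ is automatic, and I would not try to compare the two branches head on.

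For $(1)\Rightarrow(2)$: if $\ph:\D\to\bar\E$ has $\ph(0)=(0,0,0)$ and $\ph(\lm_0)=x$, then because the second coordinate of $\bar\E$ lies in $\overline{\D}$, for each fixed $z\in\D$ the map $\lm\mapsto\Psi(z,\ph(\lm))$ is holomorphic from $\D$ into $\overline{\D}$ and vanishes at $0$; the Schwarz lemma gives $|\Psi(z,x)|\le|\lm_0|$, and taking the supremum over $z$ and repeating with $\Upsilon$ yields $(2)$. For $(1)\Rightarrow(1')$ I would note that $\lm\mapsto\log\n\Psi(\cdot,\ph(\lm))\n_{H^\infty}$ is subharmonic on $\D$ (a supremum, upper semicontinuous over the compact parameter set, of the subharmonic functions $\log|\Psi(z,\ph(\lm))|$), is $\le 0$, and equals $-\infty$ at $\lm=0$; hence it is $<0$ throughout $\D$, and combined with $|\ph_1|<1$, $|\ph_2|<1$ on $\D$ (the maximum principle applied to the nonconstant coordinates) this forces $\ph(\D)\subseteq\E$. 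The equivalence $(1)\Leftrightarrow(4)$ comes from the Abouhajar--White--Young realisation of $\bar\E$-valued analytic maps as $(F_{11},F_{22},\det F)$ with $F$ in the $2\times2$ Schur class: the normalisation $\ph(0)=0$ forces $F(0)$ to have zero diagonal and zero determinant, hence to be strictly triangular, and if it is strictly lower triangular one replaces $F$ by its transpose $F^{T}$, which is again in the Schur class, leaves $(F_{11},F_{22},\det F)$ unchanged, and makes $F^{T}(0)$ strictly upper triangular.

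The step I expect to be the main obstacle is $(2)\Rightarrow(1)$, the actual construction of an interpolant. In the case $|b|\le|a|$, condition $(2)$ gives $\n\Psi(\cdot,x)\n_{H^\infty}\le|\lm_0|$, i.e.\ $y:=(a/\lm_0,b,p/\lm_0)\in\bar\E$; choosing a realising contraction $M=\left[\begin{array}{cc} a/\lm_0 & \mu \\ \nu & b\end{array}\right]$ with $\mu\nu=(ab-p)/\lm_0$ and $|\mu|=|\nu|$, I would set
\[
F(\lm)=\left[\begin{array}{cc} \lm a/\lm_0 & \mu \\ \lm\nu & \lm b/\lm_0 \end{array}\right],
\]
so that $F(0)$ is strictly upper triangular while $F(\lm_0)$ has diagonal $(a,b)$ and determinant $ab-\lm_0\mu\nu=p$, whence $\ph:=(F_{11},F_{22},\det F)$ carries exactly the prescribed data. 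The work is to verify that $F$ lies in the Schur class on $\D$; using that, for $2\times2$ matrices, $\n F(\lm)\n\le1$ is equivalent to $\n F(\lm)\n_{HS}^2\le 1+|\det F(\lm)|^2$ together with $\n F(\lm)\n_{HS}^2\le 2$, this reduces to a one-variable inequality in $|\lm|$ that is precisely what $\n\Psi(\cdot,x)\n_{H^\infty}\le|\lm_0|$ encodes; the case $|a|\le|b|$ is the transposed version based on $\Upsilon$. (One could equivalently invoke the matricial Nevanlinna--Pick theorem for the Schur class and optimise over the free entry of $F(0)$ and over the admissible realisations of $x$, but the explicit $F$ above seems cleaner.)

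Finally, with the core in place the remaining conditions are routine rewritings. $(5)\Leftrightarrow(7)$ because $\lm_0-az-b\lm_0w+pzw\ne0$ for all $z,w\in\D$ says exactly $(a/\lm_0,b,p/\lm_0)\in\bar\E$, which by the Abouhajar--White--Young criterion is $\n\Psi(\cdot,(a/\lm_0,b,p/\lm_0))\n_{H^\infty}\le1$, i.e.\ $\n\Psi(\cdot,x)\n_{H^\infty}\le|\lm_0|$. Conditions $(6),(8),(9)$ follow by squaring and completing the square, using the image-disc data $c_0=(a-\bar bp)/(1-|b|^2)$, $r_0=|ab-p|/(1-|b|^2)$ of $\Psi(\cdot,x)$ together with the identity $|c_0|^2-r_0^2=(|a|^2-|p|^2)/(1-|b|^2)$ (and the analogous data for $\Upsilon$), the side condition $|p|\le|\lm_0|$ appearing there being automatic since $|p/\lm_0|=|\det M|\le\n M\n\le1$. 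And $(10)$ is the realisation condition for $y=(a/\lm_0,b,p/\lm_0)$ rewritten in terms of a contractive matrix $\left[\begin{array}{cc}\beta_1 & \bar\beta_2 \\ \beta_2 & \bar\beta_1\end{array}\right]$. In each of these steps the standing hypothesis $x\in\E$ enters only through $|a|,|b|<1$ and through deciding which of the two symmetric branches is the active one.
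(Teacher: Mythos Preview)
Your plan is sound and would yield a correct proof, but it differs from the paper's in two respects worth noting.

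First, the paper does not re-derive the analytic core $(1)\Leftrightarrow(1')\Leftrightarrow(2)\Leftrightarrow(3)\Leftrightarrow(4)$ at all: it simply cites Theorem~1.2 of Abouhajar--White--Young.  Your explicit Schur-class construction of $F$ and your subharmonicity argument for $(1)\Rightarrow(1')$ are essentially a reconstruction of that proof; this is fine, but the paper treats it as a black box.

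Second, and more interestingly, the paper's route through $(6)$--$(10)$ is more uniform than yours.  The paper observes once and for all that $(5)$ (equivalently $(7)$) in the branch $|b|\le|a|$ says precisely $(a/\lm_0,\,b,\,p/\lm_0)\in\overline{\E}$, and then obtains each of $(6),(8),(9),(10)$ by simply reading off the corresponding item of the Abouhajar--White--Young characterization theorem (your Theorem~\ref{AWY}) for this single rescaled point: part~(4) of that theorem gives $(6)$, part~(6) gives $(8)$, part~(5) gives $(9)$, and part~(9) gives $(10)$.  You do exactly this for $(7)$ and $(10)$, but for $(6),(8),(9)$ you propose direct ``squaring and completing the square'' computations with the image-disc data of $\Psi$.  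That certainly works (and indeed the paper does $(5)\Leftrightarrow(6)$ by direct calculation too), but applying Theorem~\ref{AWY} wholesale to the rescaled point is shorter and makes the common source of all the inequalities transparent.  Your approach has the compensating advantage of being self-contained; the paper's buys brevity at the cost of relying on the full AWY list.
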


The proof of the theorem is given in Section 3. The functions
$\Psi$ and $\Upsilon$ play major role here. These two functions
were introduced in \cite{awy} and we shall briefly describe them
in Section 2. Being armed with the Schwarz lemma for the
tetrablock, we apply the result to the symmetrized bidisc to
obtain a generalized Schwarz lemma for $\mathbb G_2$, which is
another main result of this paper and is presented as Theorem
\ref{thm:Schwarz lemma} in Section 4. This is possible because of
the underlying relationship between the two domains $\mathbb E$
and $\mathbb G_2$ which has been described in Lemma
\ref{lem:tirtha} and Theorem \ref{thm:appl} in Section 4. This
result generalizes the Schwarz lemma obtained by Agler, Young in
\cite{ay-blms} and also independently by Nokrane and Ransford in
\cite{NR}.

\section{Background material}

\noindent We recall from \cite{awy} the following two rational
functions $\Psi,\Upsilon$ which play central role in the study of
complex geometry of $\E$.

\begin{defn} \label {defPsi}
    For $z \in \C$ and $x=(x_1,x_2,x_3) \in \C^3$ the functions
    $\Psi$ and $\Upsilon$ are defined as
    \begin{eqnarray}
    \Psi(z,x) &=& \frac{x_3 z - x_1}{x_2z - 1}, \label{dPsi}\\
    \Upsilon(z,x) &=& \Psi(z,(x_2,x_1,x_3)) = \frac{x_3z-x_2}{x_1z-1}. \label{dUps}
    \end{eqnarray}
\end{defn}

Also we define

\begin{equation}\label{dB}
D(x) = \sup_{z \in \D} | \Psi(z,x)| = || \Psi(.,x)||_{H^\infty}.
\end{equation}

We interpret $\Psi(.,x)$ to be the constant function equal to
$x_1$ in the event that $x_1 x_2 = x_3$; thus $\Psi(z,x)$ is
defined when $zx_2 \ne 1$ or $x_1 x_2 = x_3$. The quantity $D(x)$
is finite (and $\Psi(.,x) \in H^\infty$) if and only if either
$x_2 \in \D$ or $x_1x_2=x_3$. Indeed, for $x_2 \in \D$, the linear
fractional function $\Psi(.,x)$ maps $\D$ to the open disc with
centre and radius
\begin{equation}  \label{imPsi}
\frac{x_1-\bar x_2 x_3}{1-|x_2|^2}, \quad  \frac{|x_1 x_2 - x_3|}{1-|x_2|^2}
\end{equation}
respectively.  Hence
\begin{equation}  \label{formD}
D(x) = \left \{ \begin{array}{ll} \displaystyle \frac {|x_1 - \bar x_2 x_3| + |x_1 x_2 - x_3|}
{1-|x_2|^2} & \mbox{ if $|x_2| < 1$} \\
|x_1| & \mbox{ if $x_1x_2=x_3$}\\
\infty & \mbox { otherwise.} \end{array}
\right .
\end{equation}
Similarly, if $x_1 \in \D$, $\Upsilon(.,x)$ maps $\D$ to the open disc with
centre and radius
\[
\frac{x_2 - \bar x_1 x_3}{1-|x_1|^2}, \quad  \frac{|x_1 x_2 - x_3|}{1-|x_1|^2}
\]
respectively. It was shown in \cite{awy} that the closure
$\overline{\E}$ of the tetrablock is the following set

\begin{equation}\label{sc:eq1}
\mathbb E = \{ (x_1,x_2,x_3)\in\mathbb C^3\,:\,
1-zx_1-wx_2+zwx_3\neq 0 \textup{ whenever } |z|< 1, |w|< 1 \}.
\end{equation}
 In \cite{awy}, the points in the sets $\E$ and its closure
 $\overline{\E}$ are characterized in the following way.

\begin{thm}[Abouhajar, White, Young] \label{AWY}

For $x = (x_1, x_2, x_3)$ in $\mathbb C ^3$, the following are
equivalent.

\begin{enumerate}

\item $ x\in\mathbb E$ (respectively $x\in\overline{\mathbb E}$),

\item $\| \Psi ( \cdot , x ) \|_{H^\infty} < 1$ (respectively
$\leq 1$),

\hspace*{-0.5in} (2') $\| \Upsilon ( \cdot , x ) \|_{H^\infty} <
1$ (respectively $\leq 1$),

\item $| x_1 - \bar{x}_2 x_3 | + |x_1 x_2 - x_3 | < 1 - |x_2|^2$
(respectively $\leq 1 - |x_2|^2$),

\hspace*{-0.5in} (3') $| x_2 - \bar{x}_1 x_3 | + |x_1 x_2 - x_3 |
< 1 - |x_1|^2$ (respectively $\leq 1 - |x_1|^2$),

\item $|x_1|^2 - |x_2|^2 + |x_3|^2 + 2 |x_2 - \bar{x}_1 x_3
    | < 1$ (respectively $\leq 1$) and $|x_2| < 1$
    (respectively $\leq 1$),

\hspace*{-0.5in} (4') $ - |x_1|^2 + |x_2|^2 + |x_3|^2 + 2 |x_1 -
\bar{x}_2 x_3 | < 1$ (respectively $\leq 1$) and $|x_1| < 1$
(respectively $\leq 1$),

\item $|x_1|^2 + |x_2|^2 - |x_3|^2 + 2 |x_1x_2 - x_3 | < 1$
    (respectively $\leq 1$) and $|x_3| < 1$ (respectively
    $\leq 1$),

\item $ | x_1 - \bar{x}_2 x_3 | + |x_2 - \bar{x}_1 x_3 | < 1 -
|x_3|^2$ (respectively $\leq 1 - |x_3|^2$),

\item there exists a matrix $A=(a_{i,j}) \in \mathbb M _2$ with
$\| A \| < 1$ (respectively $\leq 1$) such that
$x=(a_{11},a_{22},\det A)$,

\item there exists a symmetric matrix $A=(a_{i,j}) \in \mathbb M
_2$ with $\| A \| < 1$ (respectively $\leq 1$) such that
$x=(a_{11},a_{22},\det A)$,

\item $| x_3 | < 1$ (respectively $\leq 1$) and there are complex
numbers $\beta_1$ and $\beta_2$ with $| \beta_1 | + |\beta_2 | <
1$ (respectively $\leq 1$) such that
    \[
    x_1 = \beta_1 + \overline{\beta}_2 x_3 \mbox{ and }  x_2 = \beta_2 + \overline{\beta}_1 x_3.
    \]

\end{enumerate}

\end{thm}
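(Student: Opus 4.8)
The plan is to prove Theorem \ref{AWY}, the characterization of $\E$ and $\overline\E$ due to Abouhajar, White and Young. Since the statement lists nine equivalent conditions, I would not try to prove all $\binom{9}{2}$ implications directly; instead I would organize the argument as a cycle of implications together with a few shortcuts, and I would prove only the strict (open-domain) version in detail, since the corresponding closed version follows by an identical computation with every strict inequality replaced by its non-strict counterpart (or, where convenient, by a limiting/continuity argument since $\overline\E$ is the closure of $\E$ by \eqref{sc:eq1}).

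The natural backbone is the equivalence of the four analytic/linear-fractional/algebraic descriptions of membership. First I would establish $(1)\Leftrightarrow(2)$ directly from the defining inequality in \eqref{sc:eq1}: writing $1-zx_1-wx_2+zwx_3 = (x_2 z - 1)w - (x_1 - x_3 z) \cdot(-1)$ — more precisely, solving the equation $1-zx_1-wx_2+zwx_3=0$ for $w$ gives $w = \dfrac{1-zx_1}{x_2 z - x_3 z\cdot 0 \ \ldots}$, so that $x\in\E$ is equivalent to requiring that for every $z\in\D$ the value $w$ forced by the vanishing of this expression has $|w|\ge 1$; this value is exactly $\Psi(z,x)$ after rearrangement, which yields $\|\Psi(\cdot,x)\|_{H^\infty}<1\Leftrightarrow x\in\E$. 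The symmetric role of $x_1,x_2$ gives $(2')$ by the same computation with $z,w$ interchanged. Next, the explicit formula \eqref{formD} for $D(x)=\|\Psi(\cdot,x)\|_{H^\infty}$, recorded in the background section, immediately converts $(2)$ into $(3)$, and the analogous formula for $\Upsilon$ converts $(2')$ into $(3')$; these are algebraic identities, not obstacles.

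The remaining conditions I would obtain from $(3)$ and $(3')$ by elementary manipulation. For $(4)$ and $(4')$: squaring and rearranging the inequality in $(3')$, namely $|x_2-\bar x_1 x_3|+|x_1 x_2 - x_3| < 1-|x_1|^2$, and using $|x_1x_2-x_3|^2$ expansions, reduces it to the quadratic form in $(4)$ together with the side condition $|x_2|<1$ (which is exactly what guarantees $D$ is finite). Condition $(5)$ is the ``self-dual'' combination $|x_1-\bar x_2 x_3|+|x_2-\bar x_1 x_3|<1-|x_3|^2$, which I would derive by adding the two descriptions or, more cleanly, by a direct determinantal identity. For the matrix conditions $(6),(7)$ I would use that any $2\times 2$ contraction $A$ can be put (via the structure of its $2\times 2$ block) into a form where $a_{11},a_{22},\det A$ are precisely $(x_1,x_2,x_3)$, and conversely apply the $2\times 2$ version of the standard result identifying $\|A\|\le 1$ with positivity of $I-A^*A$; the symmetric case $(7)$ follows since the construction can be arranged symmetrically. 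Condition $(8)$, the $\beta_1,\beta_2$ parametrization, I would read off from $(7)$: writing a symmetric contraction in terms of a $2$-contraction $(\beta_1,\beta_2)$ (so $|\beta_1|+|\beta_2|<1$) realizing the diagonal and determinant gives exactly the stated relations $x_1=\beta_1+\bar\beta_2 x_3$, $x_2=\beta_2+\bar\beta_1 x_3$.

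The main obstacle, as usual in such ``many equivalent conditions'' theorems, is the passage between the function-theoretic condition $(1)$ and the matricial conditions $(6),(7)$, because that is where an existence statement (producing a genuine $2\times2$ contraction with prescribed diagonal and determinant) must be matched against an inequality. I would handle this by routing through $(2)$: given $\|\Psi(\cdot,x)\|<1$, one builds the contraction explicitly from the linear-fractional data in \eqref{imPsi} (the centre and radius of the image disc encode the two rows of $A$), and conversely the Schur-complement computation for a $2\times2$ contraction reproduces $\Psi$. The one genuinely delicate point is the boundary (closed $\overline\E$) case, where $|x_2|=1$ or $x_1x_2=x_3$ can occur and $\Psi$ degenerates to a constant; there I would argue separately, using the interpretation of $\Psi(\cdot,x)$ as the constant $x_1$ when $x_1x_2=x_3$ recorded after \eqref{dB}, and otherwise conclude by continuity from the strict case. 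Everything else is bookkeeping among equivalent algebraic inequalities.
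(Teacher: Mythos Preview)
The paper does not prove Theorem~\ref{AWY} at all: it is quoted in Section~2 as background material and attributed to Abouhajar, White and Young \cite{awy}, with no argument supplied. So there is no ``paper's own proof'' against which your proposal can be compared; the authors simply import the result and use it as a black box in the proof of Theorem~\ref{schwarzL}.

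That said, a couple of remarks on your sketch itself. Your derivation of $(1)\Leftrightarrow(2)$ is on the right track but the algebra you wrote is garbled: solving $1-zx_1-wx_2+zwx_3=0$ for $z$ (not $w$) gives $z=(1-wx_2)/(x_1-wx_3)$, and requiring $|z|>1$ for all $w\in\overline{\D}$ is exactly $|\Psi(w,x)|<1$; what you wrote, with an ellipsis in the denominator, does not parse. Also watch your numbering: what you call ``the self-dual condition $(5)$'' is item~(6) in the theorem as stated, and the genuine item~(5) is the quadratic inequality $|x_1|^2+|x_2|^2-|x_3|^2+2|x_1x_2-x_3|<1$, which you do not address. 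Finally, the passages to the matrix conditions~(7),(8) and to the $\beta_1,\beta_2$ parametrization~(9) are the substantive steps in \cite{awy}, and your outline (``build the contraction explicitly from the linear-fractional data'', ``read off from a symmetric contraction'') is too vague to count as a plan; in the original source these require explicit constructions and a nontrivial case analysis.
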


\section{Proof of Theorem \ref{schwarzL}}

\noindent The equivalence of $(1)-(4)$ was established in Theorem
$1.2$ of \cite{awy}. We shall prove here the rest parts.\\

\noindent $(3) \Lra (5).$ Condition $(3)$ can be written as
\begin{align*}
\text{either} \q & \frac{|a-\bar b p|+ |ab-p|}{1-|b|^2} \leq
|\lm_0| \\ \text{or } \q & \frac{|b-\bar a p|+|ab-p|}{1-|a|^2}
\leq |\lm_0|.
\end{align*}
Since
\begin{align*}
&\n \Psi(.,x) \n _{H^\infty} = \frac{|a-\bar b p|+ |ab-p|}{1-|b|^2}  \q \text{and} \\
& \n \Upsilon(.,x) \n _{H^\infty} = \frac{|b-\bar a
p|+|ab-p|}{1-|a|^2}
\end{align*}

we have that condition $(3)$ is equivalent to condition $(5)$.\\

\noindent $(5) \Lra (6).$ From the definition of $\Psi(.,x)$ (see
(\ref{dPsi})), it is evident by an application of the Maximum
Modulus Principle that $ \n \Psi(.,x) \n _{H^\infty} \leq |\lm_0|$
holds if and only if
$$ \dfrac{|pz - a|}{|bz - 1|} \leq |\lm_0|  \q \textrm{for all } z\in\mathbb{T}.$$
Now for all $z\in\mathbb T$,
\begin{align*}
& \dfrac{|pz - a|}{|bz - 1|} \leq |\lm_0|\\
& \Lra |pz - a|^2 \leq |\lm_0|^2 |bz - 1|^2\\
&\Lra |pz|^2 + |a|^2 - 2 \operatorname{Re}(\bar apz) \leq |\lm_0|^2 \Big(|bz|^2 + 1 - 2 \operatorname{Re}(bz)\Big)\\
& \Lra |a|^2 - |\lm_0|^2|b|^2 + |p|^2 - |\lm_0|^2 + 2 \operatorname{Re}\big(z(|\lm_0|^2b - \bar ap)\big) \leq 0\\
&\Lra |a|^2 - |\lm_0|^2|b|^2 + |p|^2 - |\lm_0|^2 + 2\Big|
|\lm_0|^2b - \bar ap \Big | \leq 0.
\end{align*}

We obtained the last step by using the fact that $ |x| < k \Lra
\operatorname{Re}(zx)< k$, for all $z \in \mathbb{T}$.\\

Similarly we obtain
\begin{align*}
& \n \Upsilon(.,x) \n _{H^\infty} \leq |\lm_0| \\
& \Lra \dfrac{|pz - b|}{|az - 1|} \leq |\lm_0|  \q \textrm{for all } z\in\mathbb{T} \\
& \Lra |b|^2 - |\lm_0|^2|a|^2 + |p|^2 - |\lm_0|^2 + 2\Big|
|\lm_0|^2a - \bar bp \Big | \leq 0.
\end{align*}

So, $(5)\Lra (6)$ is evident now.\\

\noindent $(5) \Lra (7).$ It is evident from part-(2) of Theorem
\ref{AWY} that

\begin{align*}
& \n \Psi(.,x) \n _{H^\infty} \leq |\lm_0| \\
\Lra \q & \dfrac{|pz - a|}{|bz - 1|} \leq |\lm_0|  \q \textrm{for
all } z\in\mathbb{T} \\
\Lra \q  & \dfrac{|zp/\lm_0 - a/\lm_0|}{|bz - 1|} \leq 1  \q
\textrm{for all } z\in\mathbb{T} \\
\Lra \q & (a/\lm_0,b,p/\lm_0)\in \overline{\mathbb E} \\
\Lra \q & \lm_0 - az - b\lm_0w + pzw \neq 0 \q \text{for all }
z,w\in \D.
\end{align*}

Similarly from part-($2'$) of Theorem \ref{AWY}, we can conclude
that
\[
\n \Upsilon(.,x) \n _{H^\infty} \leq |\lm_0| \Lra \lm_0 - a\lm_0z
-bw + pzw \neq 0 \q \text{ for all }  z,w \in \D.
\]

\noindent $(7)\Lra (8).$ It is evident from the equation
(\ref{sc:eq1}) in Section 2 that $(7)$ holds if and only if
$(a/\lm_0,b,p/\lm_0)\in\overline{\E}$ or
$(a,b/\lm_0,p/\lm_0)\in\overline{\E}$. Part-(6) of Theorem
\ref{AWY} tells us that
\[
(a/\lm_0,b,p/\lm_0)\in\overline{\E} \Lra |\lm_0||a-\bar b p| +
||\lm_0|^2b-\bar a p|+|p|^2 \leq |\lm_0|^2.
\]

Also by part-(6) of Theorem \ref{AWY},
\[
(a,b/\lm_0,p/\lm_0)\in\overline{\E} \Lra |\lm_0||b-\bar a p| +
||\lm_0|^2a-\bar b p|+|p|^2 \leq |\lm_0|^2.
\]
Therefore, $(7)\Lra (8)$ holds.\\

\noindent $(7)\Lra (9).$ We apply part-(5) of Theorem \ref{AWY} to
get

\begin{align*}
\q &\lm_0 - az - b\lm_0w + pzw \neq 0 \,, \q \text{ for all } z,w \in \D \\
\Lra \q &(a/\lm_0,b,p/\lm_0)\in\overline{\E}\\
\Lra \q & |a|^2+|\lm_0b|^2-|p|^2 + 2|\lm_0||ab-p| \leq |\lm_0|^2.
\end{align*}

Similarly one obtains
\begin{align*}
\q & \lm_0 - a\lm_0z -bw + pzw \neq 0 \,, \q \text{ for all } z,w
\in \D \\
\Lra \q &(a,b/\lm_0,p/\lm_0)\in\overline{\E}\\
\Lra \q & |b|^2+|\lm_0a|^2-|p|^2 + 2|\lm_0||ab-p| \leq |\lm_0|^2.
\end{align*}
Therefore, $(7)$ is equivalent to $(9)$.\\

\noindent $(7)\Lra (10).$ We again mention that $(7)$ holds if and
only if $(a/\lm_0,b,p/\lm_0)\in\overline{\E}$ or
$(a,b/\lm_0,p/\lm_0)\in\overline{\E}$. So, by part-(9) of Theorem
\ref{AWY}, $7$ holds if and only if either $|p/\lm_0| \leq 1$ and
there exist $\beta_1,\beta_2\in\mathbb
        C$ with $|\beta_1|+|\beta_2| \leq 1$ such that
        \[
        a/\lm_0=\beta_1+\bar{\beta}_2(p/\lm_0) \text{ and } b=
        {\beta}_2+\bar{\beta}_1(p/\lm_0)
        \]
        which is same as saying that $|p|\leq |\lm_0|$ and
        \[
        a=\beta_1\lm_0+\bar{\beta}_2 p \text{ and }
        b\lm_0=\beta_2 \lm_0+\bar{\beta}_1p\,;
        \]
        or, $|p/\lm_0| \leq 1$ and there exist $\beta_1,\beta_2\in\mathbb
        C$ with $|\beta_1|+|\beta_2| \leq1$ such that
        \[
        a=\beta_1+\bar{\beta}_2(p/\lm_0) \text{ and }
        b/\lm_0=\beta_2+\bar{\beta}_1(p/\lm_0)
        \]
        which is equivalent to saying that $|p|\leq|\lm_0|$ and
        \[
        a\lm_0=\beta_1\lm_0+\bar{\beta}_2 p \text{ and }
        b=\beta_2\lm_0+\bar{\beta}_1p.
        \]

\section{Application to the symmetrized bidisc}

\noindent The symmetrized bidisc $\mathbb G_2$ was introduced by a
group of control theorists and later have been extensively studied
by the complex analysists, geometers and operator theorists during
past three decades. We list here a few of the numerous references
about the symmetrized bidisc for the readers, \cite{ay-blms,
ay-jot, tirtha-sourav, tirtha-sourav1, sourav, sourav1,
pal-shalit}. This is another domain which has root in the
$\mu$-synthesis problem. The following result provides a beautiful
connection between the two domains $\mathbb E$ and $\mathbb G_2$.

\begin{lem}[Lemma 3.2, \cite{tirtha}]\label{lem:tirtha}
Let $(x_1,x_2,x_3)\in\mathbb C^3$. Then $(x_1,x_2,x_3)\in\mathbb
E$ if and only if $(x_1+zx_2,zx_3)\in\mathbb G_2$ for all
$z\in\mathbb T$.
\end{lem}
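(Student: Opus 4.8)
The plan is to prove both implications directly from the defining inequalities for $\overline{\mathbb{E}}$ and $\mathbb{G}_2$, exploiting the fact that the map $z \mapsto (x_1 + zx_2, zx_3)$ parametrizes a family of points in $\mathbb{C}^2$ indexed by $z \in \mathbb{T}$. Recall that $(y_1,y_2) \in \mathbb{G}_2$ if and only if $|y_1 - \bar{y}_1 y_2| + |y_1|^2 < 1 - |y_2|^2$ — wait, more usefully, the characterization I would actually use is: $(s,p) \in \overline{\mathbb{G}_2}$ iff $|s| \le 2$, $|p| \le 1$, and $|s - \bar s p| \le 1 - |p|^2$ (this is the standard Agler--Young criterion, which I may invoke since the excerpt treats $\mathbb{G}_2$ as a known object, or I can instead use the $\Psi$-type function for $\mathbb{G}_2$). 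Setting $s = x_1 + zx_2$ and $p = zx_3$, I would compute $s - \bar s p$ and $1 - |p|^2$ explicitly in terms of $z \in \mathbb{T}$ and compare with the defining inequality $|x_1 - \bar{x}_2 x_3| + |x_1 x_2 - x_3| < 1 - |x_2|^2$ from part~(3) of Theorem~\ref{AWY}.

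First I would handle the forward direction: assume $x = (x_1,x_2,x_3) \in \mathbb{E}$. The cleanest route is via the function $\Psi$. Observe that for $z \in \mathbb{T}$ and $w \in \mathbb{D}$, one has $1 - w(x_1 + zx_2) + w\cdot z x_3 = 1 - wx_1 - (zw)x_2 + (zw) x_3$; since $z \in \mathbb{T}$ and $w \in \mathbb{D}$ force $zw \in \mathbb{D}$, and since $(x_1, x_2, x_3) \in \mathbb{E}$ means $1 - \zeta_1 x_1 - \zeta_2 x_2 + \zeta_1\zeta_2 x_3 \ne 0$ for all $\zeta_1, \zeta_2 \in \overline{\mathbb{D}}$, this non-vanishing transfers. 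But this is the defining condition for $(x_1 + zx_2, zx_3)$ to lie in $\mathbb{G}_2$ written in its "$1 - w s + w^2 p \ne 0$ for $|w| \le 1$" form — precisely because $\mathbb{G}_2 = \{(s,p) : 1 - ws + w^2 p \ne 0 \text{ for } w \in \overline{\mathbb{D}}\}$ after the substitution $(z_1, z_2) \mapsto (z_1 + z_2, z_1 z_2)$ combined with a standard argument (the polynomial $1 - ws + w^2 p$ in $w$ has both roots outside $\overline{\mathbb{D}}$). Actually the subtlety is matching the two-variable non-vanishing of $\mathbb{E}$ with the one-variable non-vanishing of $\mathbb{G}_2$; the key identity is that $1 - w_1 s - w_2 \cdot(\text{something})$... — the correct bridge is to note $\mathbb{G}_2 = \{(z_1+z_2, z_1z_2)\}$ and write the $\mathbb{G}_2$-membership of $(x_1 + zx_2, zx_3)$ as the existence of a factorization, or more efficiently to use that $(s,p)\in\overline{\mathbb{G}_2} \iff 1 - w s + w^2 p \ne 0$ for all $w \in \mathbb{D}$, then substitute $w_1 = w$, $w_2 = wz$ with $z \in \mathbb{T}$ to see $1 - w_1 x_1 - w_2 x_2 + w_1 w_2 x_3 = 1 - w(x_1 + zx_2) + w^2 z x_3$.

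For the converse, suppose $(x_1 + zx_2, zx_3) \in \mathbb{G}_2$ for every $z \in \mathbb{T}$. I would run the same computation backwards: given arbitrary $\zeta_1, \zeta_2 \in \overline{\mathbb{D}}$ with $1 - \zeta_1 x_1 - \zeta_2 x_2 + \zeta_1 \zeta_2 x_3 = 0$, I want a contradiction. If $\zeta_1 = 0$ or $\zeta_2 = 0$ the argument degenerates and must be checked separately (then one of the one-variable conditions forces $|x_1| \ge 1$ or $|x_2| \ge 1$, contradicting membership). Otherwise write $\zeta_2 = z\zeta_1$ — but $z = \zeta_2/\zeta_1$ need not lie on $\mathbb{T}$, so here one needs a limiting/continuity argument or a maximum-principle argument in $w$: the function $w \mapsto 1 - w x_1 - (wz) x_2 + w^2 z x_3$ is analytic, and the hypothesis gives non-vanishing on $\mathbb{D}$ for $z \in \mathbb{T}$; one then extends to all $z \in \overline{\mathbb{D}}$ by continuity of roots, and deduces the full two-variable condition. \textbf{The main obstacle} I anticipate is exactly this passage between the single-variable non-vanishing statement (parametrized by $z \in \mathbb{T}$) and the genuinely two-variable non-vanishing statement defining $\mathbb{E}$: one must be careful about the boundary $\mathbb{T}$ versus the interior $\mathbb{D}$, handle the degenerate cases $\zeta_i = 0$, and possibly invoke Hurwitz's theorem or a normal-families argument to pass the non-vanishing to the limit. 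Once that correspondence is set up cleanly, both directions follow from the algebraic identity $1 - w_1 x_1 - w_2 x_2 + w_1 w_2 x_3 = 1 - w(x_1 + z x_2) + w^2(z x_3)$ with $w = w_1$, $z = w_2/w_1$.
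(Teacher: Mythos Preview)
The paper does not supply a proof of this lemma: it is quoted verbatim from \cite{tirtha} and used as a black box. So there is no ``paper's own proof'' to compare against; I can only assess your sketch on its merits.

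Your forward direction is correct. The identity
\[
1 - w(x_1+zx_2) + w^2(zx_3) \;=\; 1 - \zeta_1 x_1 - \zeta_2 x_2 + \zeta_1\zeta_2 x_3,\qquad \zeta_1=w,\ \zeta_2=wz,
\]
together with the fact that $|w|\le 1$ and $|z|=1$ force $(\zeta_1,\zeta_2)\in\overline{\mathbb D}^2$, immediately transfers the non-vanishing defining $\mathbb E$ to the non-vanishing of $1-ws+w^2p$ on $\overline{\mathbb D}$, which is exactly membership of $(s,p)=(x_1+zx_2,\,zx_3)$ in $\mathbb G_2$.

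The converse, however, has a real gap that your proposed fix does not close. You suggest extending the non-vanishing of $w\mapsto 1-w(x_1+zx_2)+w^2(zx_3)$ from $z\in\mathbb T$ to $z\in\overline{\mathbb D}$ ``by continuity of roots'' or Hurwitz. Neither applies: $\mathbb T$ is not dense in $\overline{\mathbb D}$, so interior $z$ are not limits of boundary $z$, and nothing prevents the roots in $w$ from migrating into $\overline{\mathbb D}$ as $z$ moves inward. Moreover, even if that extension were valid, the substitution $\zeta_1=w,\ \zeta_2=wz$ with $z\in\overline{\mathbb D}$ only reaches pairs with $|\zeta_2|\le|\zeta_1|$, so you would still not have covered all of $\overline{\mathbb D}^2$.

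A clean route for the converse avoids the two-variable polynomial entirely. Using the Agler--Young criterion $|s-\bar s p|<1-|p|^2$ for $(s,p)\in\mathbb G_2$, substitute $s=x_1+zx_2$, $p=zx_3$ with $|z|=1$ and simplify:
\[
s-\bar s p \;=\; (x_1-\bar x_2 x_3) + z(x_2-\bar x_1 x_3),\qquad 1-|p|^2 \;=\; 1-|x_3|^2.
\]
Taking the supremum over $z\in\mathbb T$ (which is attained, by compactness) gives
\[
|x_1-\bar x_2 x_3| + |x_2-\bar x_1 x_3| \;<\; 1-|x_3|^2,
\]
which is precisely condition~(6) of Theorem~\ref{AWY}, hence $x\in\mathbb E$. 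This computation also runs in reverse and gives a tidier proof of the forward implication than the polynomial argument.
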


Among the various characterizations of the points of $\mathbb G_2$
(see Theorem 1.1, \cite{ay-jot}), the following is very elegant.

\begin{thm}[Theorem 1.1, \cite{ay-jot}]\label{thm:AY}
Let $(s,p)\in\mathbb C^2$. Then $(s,p)\in\mathbb G_2$ if and only
if $|p|<1$ and there exists $\beta \in\mathbb C$ with $|\beta|<1$
such that
\[
s=\beta+\bar{\beta}p.
\]

\end{thm}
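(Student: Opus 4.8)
\noindent The plan is to reduce membership in $\mathbb{G}_2$ to the location of the roots of a quadratic and then to read off the parameter $\beta$ by an algebraic computation. Writing $s=z_1+z_2$ and $p=z_1z_2$, the pair $\{z_1,z_2\}$ is precisely the (unordered) set of roots of $q(z)=z^2-sz+p$, and by definition $(s,p)\in\mathbb{G}_2$ exactly when both roots lie in $\mathbb{D}$. Thus the statement to prove is: $q$ has both its roots in $\mathbb{D}$ if and only if $|p|<1$ and there is some $\beta\in\mathbb{D}$ with $s=\beta+\bar\beta p$.

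For the forward implication, suppose $z_1,z_2\in\mathbb{D}$. Then $|p|=|z_1||z_2|<1$, and combining $s=\beta+\bar\beta p$ with its conjugate $\bar s=\bar\beta+\beta\bar p$ forces $\beta=(s-\bar sp)/(1-|p|^2)$; so the only thing left to check is $|\beta|<1$, that is, $|s-\bar sp|<1-|p|^2$. For this I would substitute $s=z_1+z_2$ and $p=z_1z_2$ to obtain $s-\bar sp=z_1(1-|z_2|^2)+z_2(1-|z_1|^2)$, estimate by the triangle inequality, and finish with the elementary identity
\[
1-|z_1z_2|^2-|z_1|\bigl(1-|z_2|^2\bigr)-|z_2|\bigl(1-|z_1|^2\bigr)=\bigl(1-|z_1|\bigr)\bigl(1-|z_2|\bigr)\bigl(1-|z_1||z_2|\bigr),
\]
whose right-hand side is strictly positive when $|z_1|,|z_2|<1$.

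For the reverse implication, suppose $|p|<1$ and $s=\beta+\bar\beta p$ with $|\beta|<1$, and let $z_1,z_2$ be the roots of $q$, so that $z_1z_2=p$ and $z_1+z_2=s$. Since $|z_1||z_2|=|p|<1$, at least one root---say $z_1$---lies in $\mathbb{D}$. Rewriting $s=\beta+\bar\beta p$ first as $z_1+z_2-\bar\beta z_1z_2=\beta$ and then as $z_2(1-\bar\beta z_1)=\beta-z_1$ (the factor $1-\bar\beta z_1$ is nonzero since $|\bar\beta z_1|<1$) gives $z_2=(\beta-z_1)/(1-\bar\beta z_1)$; as $|\beta|<1$ and $|z_1|<1$, the map $w\mapsto(\beta-w)/(1-\bar\beta w)$ is an automorphism of $\mathbb{D}$, hence $|z_2|<1$ too. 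Therefore $(s,p)=(z_1+z_2,z_1z_2)\in\mathbb{G}_2$.

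I do not anticipate a serious obstacle; the only step calling for care is the bookkeeping around ``at least one root lies in $\mathbb{D}$''. That step handles the degenerate configurations automatically --- if $z_1=0$ then $p=0$ and $s=\beta$, so $(s,p)=(\beta,0)\in\mathbb{G}_2$, and a double root of $q$ lies in $\mathbb{D}$ as soon as $|p|<1$ --- but it is worth spelling out. Beyond that and the verification of the displayed identity everything is elementary and uses no function theory; as an alternative, the root-location criterion ``both roots of $q$ in $\mathbb{D}$ $\iff$ $|p|<1$ and $|s-\bar sp|<1-|p|^2$'' could be deduced from the Schur--Cohn test applied to the quadratic $q$.
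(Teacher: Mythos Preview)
Your argument is correct. The forward direction is a clean computation: the formula $\beta=(s-\bar s p)/(1-|p|^2)$ is forced, the expansion $s-\bar s p=z_1(1-|z_2|^2)+z_2(1-|z_1|^2)$ is right, and the displayed identity (which does check out) finishes the estimate. The reverse direction via the Blaschke factor $w\mapsto(\beta-w)/(1-\bar\beta w)$ is a nice touch and handles the edge cases you flag without further work.

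As for comparison with the paper: there is nothing to compare. The paper does not prove this theorem at all; it is quoted verbatim as Theorem~1.1 of \cite{ay-jot} and used as a black box (the formula $\beta=(s-\bar s p)/(1-|p|^2)$ is invoked once, in the proof of Theorem~\ref{thm:appl}, but not derived). Your proof is therefore an independent, self-contained verification. It is also more elementary than the route typically taken in the literature, which often goes through the function $\Phi(z,s,p)=(2zp-s)/(2-zs)$ and the characterization $(s,p)\in\mathbb G_2\iff\sup_{z\in\mathbb D}|\Phi(z,s,p)|<1$; by working directly with the roots of $z^2-sz+p$ you avoid any function theory beyond the disc automorphism.
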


The following result will play central role in determining the
interpolating function in the Schwarz lemma for the symmetrized
bidisc.

\begin{thm}\label{thm:appl}
The symmetrized bidisc $\mathbb G_2$ can be analytically embedded
inside the tetrablock via the follwoing map
\begin{align*}
f : &\; \mathbb G_2 \rightarrow \mathbb E \\& (s,p) \mapsto
(\frac{s}{2},\frac{s}{2},p)\,.
\end{align*}
Conversely, the map
\begin{align*}
g: & \mathbb E \rightarrow \mathbb G_2 \\& (a,b,p) \mapsto (a+b,p)
\end{align*}
maps the tetrablock analytically onto the symmetrized bidisc and
when restricted to $f(\mathbb G_2)$, becomes the inverse of the
map $f$. Moreover, both $f$ and $g$ map the origin to the origin.
\end{thm}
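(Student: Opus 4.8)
The plan is to verify the three assertions of the theorem one at a time, in each case reducing to one of the characterizations of $\E$ and $\mathbb{G}_2$ recorded above.

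First I would check that $f$ actually maps $\mathbb{G}_2$ into $\E$. Let $(s,p)\in\mathbb{G}_2$; by Theorem~\ref{thm:AY} we have $|p|<1$ and there is $\beta\in\D$ with $s=\beta+\bar{\beta} p$. The natural thing to try is the symmetric choice $\beta_1=\beta_2=\beta/2$: then $|\beta_1|+|\beta_2|=|\beta|<1$ and
\[
\frac{s}{2}=\frac{\beta}{2}+\frac{\bar{\beta}}{2}\,p=\beta_1+\bar{\beta}_2 p=\beta_2+\bar{\beta}_1 p ,
\]
so part~(9) of Theorem~\ref{AWY}, applied with $x_1=x_2=s/2$ and $x_3=p$, yields $f(s,p)=(s/2,s/2,p)\in\E$. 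Thus $f$ is well defined; it is the restriction of a linear map, hence holomorphic, and it is injective because the coordinates of $f(s,p)$ already determine $s$ and $p$.

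Next I would show that $g$ maps $\E$ into $\mathbb{G}_2$ and is onto. If $(a,b,p)\in\E$ then Lemma~\ref{lem:tirtha} gives $(a+zb,zp)\in\mathbb{G}_2$ for all $z\in\mathbb{T}$; taking $z=1$ shows $g(a,b,p)=(a+b,p)\in\mathbb{G}_2$, so $g$ is a well-defined (again linear, hence holomorphic) map $\E\to\mathbb{G}_2$. For surjectivity, given $(s,p)\in\mathbb{G}_2$ we have just seen that $f(s,p)\in\E$, and a direct computation gives $g(f(s,p))=(s/2+s/2,\,p)=(s,p)$; hence every point of $\mathbb{G}_2$ lies in the range of $g$.

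Finally I would assemble the embedding statement. The identity $g\circ f=\mathrm{id}_{\mathbb{G}_2}$ just obtained shows in particular that $f$ is injective and that, for any $w=f(v)\in f(\mathbb{G}_2)$, one has $f(g(w))=f(g(f(v)))=f(v)=w$; hence $g|_{f(\mathbb{G}_2)}$ is a two-sided inverse of $f$. Since $f$ and $g$ are both holomorphic, this says precisely that $f$ is a biholomorphism of $\mathbb{G}_2$ onto the subset $f(\mathbb{G}_2)$ of $\E$, i.e.\ that $\mathbb{G}_2$ is analytically embedded in the tetrablock by $f$. Evaluating at the origin gives $f(0,0)=(0,0,0)$ and $g(0,0,0)=(0,0)$, which completes the verification. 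I do not expect a genuine obstacle here: the only step that is not pure bookkeeping is the first one, where one must recognise that the two-parameter description of $\E$ in part~(9) of Theorem~\ref{AWY} is met by the diagonal choice $\beta_1=\beta_2$ built from the single parameter $\beta$ appearing in the description of $\mathbb{G}_2$.
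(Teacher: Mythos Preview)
Your proof is correct and follows essentially the same route as the paper: both use Theorem~\ref{thm:AY} together with part~(9) of Theorem~\ref{AWY} via the diagonal choice $\beta_1=\beta_2=\beta/2$ to show $f(\mathbb{G}_2)\subset\E$, and both invoke Lemma~\ref{lem:tirtha} (at $z=1$) for the map $g$. Your argument is in fact slightly more complete, since you verify the surjectivity of $g$ explicitly through $g\circ f=\mathrm{id}_{\mathbb{G}_2}$, whereas the paper leaves this implicit.
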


\begin{proof}
The map
\begin{align*}
f : &\; \mathbb G_2 \rightarrow \mathbb E \\& (s,p) \mapsto
(\frac{s}{2},\frac{s}{2},p)\,,
\end{align*}
is clearly a one-one and holomorphic map. All we need to show is
that it maps $\mathbb G_2$ into the tetrablock. Let
$(s,p)\in\mathbb G_2$. Then by Theorem \ref{thm:AY}, $s=\beta
+\bar{\beta}p$, where
\[
\beta = \frac{s-\bar s p}{1-|p|^2} \text{ and } |\beta| <1\,.
\]
Now
\[
\frac{s}{2}=\frac{\beta}{2} +\overline{\frac{\beta}{2}}p
\]
together with $\left| \dfrac{\beta}{2} \right|+ \left|
\dfrac{\beta}{2} \right|<1$ imply by part-(9) of Theorem \ref{AWY}
that
$\left( \dfrac{s}{2},\dfrac{s}{2},p \right)\in \mathbb E$. Hence we are done.\\

Conversely, the function $g$ maps $\mathbb E$ to $\mathbb G_2$ by
Lemma \ref{lem:tirtha} and is evidently holomorphic. Now
\[
f(G)=\{ (a,b,p)\in\mathbb E\,:\, a=b  \}.
\]
Because, if $(a,a,p)\in \mathbb E$, then by Lemma
\ref{lem:tirtha}, $(2a,p)\in\mathbb G_2$ and by the previous part
$(a,a,p)=f(2a,p)$. This also proves that $g|_{f(\mathbb G_2)}$ is
the inverse of $f$. Needless to prove that both $f$ and $g$  map the origin to the origin.

\end{proof}

\subsection{A generalized Schwarz lemma for the symmetrized bidisc}

In this subsection, we apply Theorem \ref{schwarzL} to obtain a
Schwarz lemma for the symmetrized bidisc. This result generalizes
the previous results of Agler, Young and Nokrane and Ransford
\cite{ay-blms, NR}. We transfer the results of the tetrablock to
the symmetrized bidisc via the maps $f$ and $g$ and obtain the
following (generalized) Schwarz lemma for the symmetrized bidisc.

\begin{thm}\label{thm:Schwarz lemma}
Let $\lambda_0 \in \mathbb D \setminus \{ 0 \}$ and let $(s,p)\in
\mathbb G_2$. Then the following are equivalent:

        \item[$(1)$] there exists an analytic function $\psi: \D \to
        \mathbb G_2$ such that
        $\ph(0)=(0,0)$ and $\psi(\lm_0) = (s,p)$;
        \item[$(1^\prime)$] there exists an analytic function $\psi: \D \to
        \Gamma_2$ such that
        $\ph(0)=(0,0)$ and $\psi(\lm_0) = (s,p)$;
        \item[$(2)$]
        \[
        \frac{2|s-\bar s p|+ |s^2-4p|}{4-|s|^2} \leq |\lm_0|;
        \]

        \item[$(3)$] there exists a $2\times 2$ function $F$ in the Schur class such that
        \[
        F(0) = \left[ \begin{array}{cc} 0 & * \\ 0 & 0 \end{array} \right] \mbox{ and }
        F(\lm_0) = A = [a_{ij}]\,,
        \]
        where $a_{11}=a_{22}=\dfrac{s}{2}$ and $p=\det A$.\\

        \item[$(4)$] $\n \Psi(.,(\dfrac{s}{2},\dfrac{s}{2},p)) \n _{H^\infty}
        \leq |\lm_0|$\\

        \item[$(5)$]
        $
        (1-|\lm_0|^2)|s|^2 +4|p|^2 + 4\Big| |\lm_0|^2s - \bar s p \Big| \leq
        4|\lm_0|^2\,;
        $ \\

        \item[$(6)$] $2\lm_0 - (z +\lm_0 w)s+ 2pzw \neq 0$ for all $z,w \in
        \D$;\\

        \item[$(7)$]
        $
        |\lm_0||s-\bar s p| + ||\lm_0|^2s-\bar s p|+
        2|p|^2 \leq 2|\lm_0|^2\,;
        $ \\

        \item[$(8)$] $|p|\leq |\lm_0|$ and
        \[
        (1+|\lm_0|^2)|s|^2-4|p|^2 + 2|\lm_0||s^2-4p| \leq
        4|\lm_0|^2\,;
        \]

        \item[$(9)$] $|p| \leq |\lm_0|$ and there exist $\beta\in\mathbb
        C$ with $|\beta| \leq 1$ such that
        \[
        \text{either } {s}=2\beta\lm_0+2\bar{\beta} p\,
        \text{ or } \lm_0 s=2\beta \lm_0+2\bar{\beta}p\,.
        \]

\end{thm}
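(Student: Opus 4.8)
The plan is to transfer Theorem \ref{schwarzL} through the embedding $f:\G_2\to\E$, $(s,p)\mapsto(s/2,s/2,p)$, and its one-sided inverse $g:\E\to\G_2$, $(a,b,p)\mapsto(a+b,p)$, established in Theorem \ref{thm:appl}. The key observation is that a point $(s,p)\in\G_2$ corresponds under $f$ to the point $x=(s/2,s/2,p)\in\E$, for which $a=b=s/2$; thus $|a|=|b|$ and the ``either/or'' dichotomies in conditions $(5)$--$(10)$ of Theorem \ref{schwarzL} collapse — both branches are simultaneously available and, by symmetry in $a,b$, give identical inequalities. So the strategy is: (i) prove $(1)\Lra(1')$ and $(1)\Lra(2)$ via $f$ and $g$, then (ii) substitute $a=b=s/2$ into the remaining equivalent conditions of Theorem \ref{schwarzL} and simplify.

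First I would show $(1)\imp(1')$ is immediate (any $\psi:\D\to\G_2$ is a fortiori a map into $\Gm=\overline{\G_2}$, wait — rather $\Gamma_2=\overline{\G_2}$, so $\G_2\subset\Gamma_2$ gives this direction). For $(1')\imp(1)$: given $\psi:\D\to\Gamma_2$ with $\psi(0)=0$, $\psi(\lm_0)=(s,p)$, compose with $f$ (extended to $\Gamma_2\to\overline\E$) to get $\ph=f\circ\psi:\D\to\overline\E$ with $\ph(0)=0$, $\ph(\lm_0)=(s/2,s/2,p)$; by $(1)\Lra(1')$ of Theorem \ref{schwarzL} there is $\tilde\ph:\D\to\E$ with the same values, and then $\psi_1=g\circ\tilde\ph:\D\to\G_2$ satisfies $\psi_1(0)=0$ and $\psi_1(\lm_0)=g(s/2,s/2,p)=(s,p)$, using that $g\circ f=\mathrm{id}_{\G_2}$. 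This establishes $(1)\Lra(1')$. Next, $(1)\Lra(2)$: the map $\psi\mapsto f\circ\psi$ is a bijection between analytic disc functions $\D\to\overline{\G_2}$ sending $0\mapsto0$, $\lm_0\mapsto(s,p)$ and analytic disc functions $\D\to f(\Gamma_2)\subset\overline\E$ sending $0\mapsto0$, $\lm_0\mapsto(s/2,s/2,p)$ — wait, one must check that $\ph:\D\to\overline\E$ with $\ph(\lm_0)=(s/2,s/2,p)$ forces $\ph$ to land in $f(\Gamma_2)$, which need not hold; instead one argues directly that $(1)$ holds iff condition $(2)$ of Theorem \ref{schwarzL} holds at $x=(s/2,s/2,p)$, and $\max\{D(x),\n\Upsilon(\cdot,x)\n\}$ with $a=b=s/2$ equals the single quantity $\dfrac{|s/2-\overline{(s/2)}p|+|s^2/4-p|}{1-|s/2|^2}=\dfrac{2|s-\bar sp|+|s^2-4p|}{2(4-|s|^2)}$; setting this $\le|\lm_0|$ and clearing denominators is \emph{not} quite the stated $(2)$ — so I would recheck the arithmetic, but the stated form should emerge after multiplying through, and this is routine.

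Having secured $(1)\Lra(1')\Lra(2)$ as the anchor, I would obtain $(3)$--$(9)$ by specializing the correspondingly-numbered conditions of Theorem \ref{schwarzL} to $a=b=s/2$. Specifically: $(3)$ here is condition $(4)$ of Theorem \ref{schwarzL} with $a_{11}=a_{22}=s/2$, $\det A=p$; $(4)$ here is condition $(5)$ of Theorem \ref{schwarzL} (the $\Psi$-branch, which equals the $\Upsilon$-branch since $a=b$); $(5)$ here is condition $(6)$ of Theorem \ref{schwarzL} with $|a|=|b|=|s|/2$ and $\bar ap=\bar sp/2$, so $|a|^2-|\lm_0|^2|b|^2+|p|^2+2||\lm_0|^2b-\bar ap|\le|\lm_0|^2$ becomes $\tfrac14|s|^2-\tfrac14|\lm_0|^2|s|^2+|p|^2+2\cdot\tfrac12||\lm_0|^2s-\bar sp|\le|\lm_0|^2$; multiplying by $4$ gives $(1-|\lm_0|^2)|s|^2+4|p|^2+4||\lm_0|^2s-\bar sp|\le4|\lm_0|^2$, exactly $(5)$. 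Similarly $(6)$ here is condition $(7)$ of Theorem \ref{schwarzL}: $\lm_0-az-b\lm_0w+pzw=\lm_0-\tfrac s2z-\tfrac s2\lm_0w+pzw$, and multiplying by $2$ yields $2\lm_0-(z+\lm_0w)s+2pzw\ne0$; condition $(8)$ here is condition $(8)$ of Theorem \ref{schwarzL} specialized; condition $(8)$ here is condition $(9)$ of Theorem \ref{schwarzL} with $|ab-p|=|s^2/4-p|=\tfrac14|s^2-4p|$, giving $\tfrac14|s|^2+\tfrac14|\lm_0|^2|s|^2-|p|^2+2|\lm_0|\cdot\tfrac14|s^2-4p|\le|\lm_0|^2$, i.e. after multiplying by $4$, $(1+|\lm_0|^2)|s|^2-4|p|^2+2|\lm_0||s^2-4p|\le4|\lm_0|^2$; and $(9)$ here is condition $(10)$ of Theorem \ref{schwarzL} with $\beta_1=\beta_2=\beta$ forced by $a=b$ (so $|\beta_1|+|\beta_2|\le1$ becomes $2|\beta|\le1$ — here I would note the discrepancy with the stated $|\beta|\le1$ and reconcile it by a rescaling $\beta\mapsto\beta/2$ in the presentation, or verify that the two equations $a=\beta_1\lm_0+\bar\beta_2p$, $b\lm_0=\beta_2\lm_0+\bar\beta_1p$ with $a=b$ force $\beta_1=\beta_2$ only up to a harmless reparametrization). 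The main obstacle is purely bookkeeping: making sure the dichotomy in each branch of Theorem \ref{schwarzL} genuinely collapses (it does, because $|a|=|b|$ makes ``$|b|\le|a|$'' and ``$|a|\le|b|$'' both true and the two inequalities coincide under $a\leftrightarrow b$), and tracking the factors of $2$ and $4$ through each substitution so that the cleared-denominator forms match the statement exactly; the deeper content is already contained in Theorems \ref{schwarzL} and \ref{thm:appl}.
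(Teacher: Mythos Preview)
Your approach is exactly the paper's: transfer Theorem \ref{schwarzL} to $\G_2$ via the maps $f,g$ of Theorem \ref{thm:appl}, composing $g\circ\ph$ (respectively $f\circ\psi$) to move interpolating functions between the two domains, and then specialize each condition of Theorem \ref{schwarzL} to $a=b=s/2$ so that the $|a|\le|b|$ / $|b|\le|a|$ dichotomies collapse. Your arithmetic worry in $(2)$ is just a slip (the extra factor of $2$ in your denominator disappears once you write $|s^2/4-p|=|s^2-4p|/4$), and the paper handles the passage from condition $(10)$ of Theorem \ref{schwarzL} to condition $(9)$ here with the same level of informality you flag.
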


\begin{proof}
The proof follows easily from Theorem \ref{thm:appl} by applying
the functions $f$ and $g$. For $(s,p)\in\mathbb G_2$, we have
$f((s,p))=(\dfrac{s}{2}, \dfrac{s}{2},p)\in \mathbb E$ and Theorem
\ref{schwarzL} guarantees the existence of an analytic function
$\phi$ from $\mathbb D$ to $\mathbb E$ or to $\overline{\mathbb
E}$ that maps $0$ to $0$ and $\lm_0$ to $(\dfrac{s}{2},
\dfrac{s}{2},p)$, when the conditions $(2)-(10)$ are satisfied
with $a=b=\dfrac{s}{2}$. The conditions $(2)-(10)$ of Theorem
\ref{schwarzL} when $a=b=\dfrac{s}{2}$, have been represented as
conditions $(2)-(9)$ in this theorem. Hence under the conditions
$(2)-(9)$, we obtain the analytic function $\psi=g\circ \phi$ that
maps $0$ to $0$ and $\lm_0$ to $(s,p)$.\\

The converse is also true. That is, if there is a function $\phi$
from $\mathbb D$ to $\mathbb G_2$ such that $\phi (0)=0$ and
$\phi(\lm_0)=(s,p)$, then $f\circ \phi \,:\mathbb D \rightarrow
\mathbb E$ maps $0$ to $0$ and $\lm_0$ to $(\dfrac{s}{2},
\dfrac{s}{2},p)$. So, the conditions $(2)-(10)$ of Theorem
\ref{schwarzL} are satisfied with $a=b=\dfrac{s}{2}$, which is
same as saying that the conditions $(2)-(9)$ hold. Hence the proof
is complete.

\end{proof}

  \vspace{0.60cm}

\end{document}